\newcommand{\cA}{\ensuremath{\mathcal A}}
\newcommand{\cB}{\ensuremath{\mathcal B}}
\newcommand{\cS}{\ensuremath{\mathcal S}}
\newcommand{\cT}{\ensuremath{\mathcal T}}
\newcommand{\AP}{\ensuremath{\text{AP}}}
\renewcommand{\phi}{\varphi}
\renewcommand{\rho}{\varrho}
\let\setminus=\smallsetminus
\let\emptyset=\varnothing
\newcommand\redsout{\bgroup\markoverwith{\textcolor{red}{\rule[0.5ex]{2pt}{0.5pt}}}\ULon}
\declaretheorem[parent=section]{theorem}
\declaretheorem[sibling=theorem]{lemma}
\declaretheorem[sibling=theorem]{claim}
\setlist{itemsep=0.1em, topsep=0.1em, parsep=0.1em, partopsep=0.1em}
\colorlet{RoyalRed}{red!70!black}
\definecolor{RoyalBlue}{rgb}{0.25, 0.41, 0.88}
\definecolor{RoyalAzure}{rgb}{0.0, 0.22, 0.66}
\newlength{\bibitemsep}\setlength{\bibitemsep}{0.5pt}
\newlength{\bibparskip}\setlength{\bibparskip}{0.5pt}
\let\oldthebibliography\thebibliography
\renewcommand\thebibliography[1]{%
  \oldthebibliography{#1}%
  \setlength{\parskip}{\bibitemsep}%
  \setlength{\itemsep}{\bibparskip}%
}
\title{Small subsets without $k$-term arithmetic progressions}
\author{
  Rajko Nenadov\thanks{Google Z\"urich. Email: \texttt{rajkon@gmail.com}.}
}
\date{}
\begin{document}
\maketitle

\begin{abstract}
Szemer\'edi's theorem implies that there are $2^{o(n)}$ subsets of $[n]$ which do not contain a $k$-term arithmetic progression. A sparse analogue of this statement was obtained by Balogh, Morris, and Samotij, using the hypergraph container method: For any $\beta > 0$ there exists $C > 0$, such that if $m \ge Cn^{1 - 1/(k-1)}$ then there are at most $\beta^m \binom{n}{m}$ $m$-element subsets of $\{1, \ldots, n\}$ without a $k$-term arithmetic progression. We give a short, inductive proof of this result. Consequently, this provides a short proof of the Szemer\'edi's theorem in random subsets of integers.
\end{abstract}

\section{Introduction}

The celebrated Szemer\'edi's theorem \cite{szemeredi75} states that for every $k \in \mathbb{N}$ and $\alpha > 0$, if $n$ is sufficiently large then any subset $S \subseteq [n]$ of size $|S| \ge \alpha n$ contains a $k$-term arithmetic progression (AP for short). This implies that there are $2^{o(n)}$ subsets of $[n]$ with no $k$-term AP. A better understanding of the $o(n)$ term was obtained only recently by Balogh, Liu, and Sharifzadeh \cite{balogh17}.

The following extension to small subsets was obtained by Balogh, Morris, and Samotij \cite{balogh2015independent} using the newly developed hypergraph containers method.

\begin{theorem} \label{thm:main}
    For every $\beta > 0$ there exist $\gamma, C > 0$, such that if $m \ge Cn^{1 - 1/(k-1)}$ then all but at most
    $$
        \beta^m \binom{n}{m}
    $$
    $m$-element subsets of $[n]$ contain at least $\gamma n^2 (m/n)^k$ $k$-term arithmetic progressions.
\end{theorem}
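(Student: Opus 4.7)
The plan is to prove the theorem by induction on $k$. The base case $k = 2$ is immediate: every $m$-element subset contains $\binom{m}{2} > \gamma n^2(m/n)^2$ two-term APs whenever $\gamma < 1/2$, so there are no exceptional sets and one may take $C = 1$.

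For the inductive step, fix $\beta > 0$, choose an auxiliary $\beta' = \beta'(\beta) \ll \beta$, and let $\gamma', C'$ be the constants supplied by the induction hypothesis for $k-1$. Call an $m$-subset $B \subseteq [n]$ \emph{bad} if it contains fewer than $\gamma n^2(m/n)^k$ $k$-APs, and split the bad sets into two types. \emph{Type I}: $B$ has fewer than $\gamma' n^2(m/n)^{k-1}$ $(k-1)$-APs; by the induction hypothesis these contribute at most $(\beta')^m\binom{n}{m} \leq (\beta/2)^m\binom{n}{m}$. \emph{Type II}: $B$ has at least $\gamma' n^2(m/n)^{k-1}$ $(k-1)$-APs but still fewer than $\gamma n^2(m/n)^k$ $k$-APs. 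It remains to bound the number of Type II sets by $(\beta/2)^m\binom{n}{m}$.

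For each Type II set $B$ I construct a fingerprint $T = T(B) \subseteq B$ of size at most $\tau m$ (with $\tau = \tau(\beta)$ small) such that the set of extensions
\[
    L(T) := \{\,x \in [n] : \text{some $(k-1)$-AP } A \subseteq T \text{ satisfies that } A \cup \{x\} \text{ is a $k$-AP}\,\}
\]
has $|L(T)| \geq \delta n$ for some $\delta = \delta(\beta) > 0$, and moreover $B \cap L(T) \subseteq T$. The construction is greedy: iteratively add to $T$ an element of $B$ whose inclusion maximises the growth of $|L(T)|$, until $|L(T)| \geq \delta n$; then perform a cleanup pass, appending any remaining elements of $(B \cap L(T)) \setminus T$ to $T$. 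Cleanup adds at most $2 \gamma n^2(m/n)^k$ elements, because each such element is an extension of a $(k-1)$-AP in $T \subseteq B$, hence the last vertex of a distinct $k$-AP inside $B$; for $\gamma$ small this is $o(m)$. Given the fingerprint, $B\setminus T \subseteq [n]\setminus L(T)$, so $B$ lies inside the union of $T$ and a set of size at most $(1-\delta)n$. A routine binomial estimate,
\[
    |\{\text{Type II sets}\}| \;\leq\; \binom{n}{\tau m}\binom{(1-\delta)n}{(1-\tau)m} \;\leq\; (\beta/2)^m \binom{n}{m},
\]
then closes the induction, provided $\tau \ll \delta$ and $C$ is chosen sufficiently large in terms of $\beta, \gamma', \gamma$.

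The main obstacle is the greedy growth claim: that $|L(T)|$ reaches $\delta n$ after only $\tau m$ additions. This is a supersaturation-type statement—many $(k-1)$-APs in $B$ must yield many \emph{distinct} extensions, not merely many extensions counted with multiplicity. I expect the proof to reduce to a direct double-counting estimate controlling codegrees of extensions among $(k-1)$-APs of $B$, and it is precisely this step that forces the assumption $m \geq Cn^{1-1/(k-1)}$: at this scale, producing $\Theta(n)$ distinct extensions requires a fingerprint of size at least $\Theta(n^{1-1/(k-1)})$, which must still be a vanishing fraction of $m$, and this only succeeds once $m$ exceeds the threshold by a factor depending on $\beta$.
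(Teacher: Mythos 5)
Your route is genuinely different from the paper's. The paper does not induct on $k$ at all: it proves a strengthened, ``robust'' statement by double induction on $(k,k')$, where $k'$ tracks how many elements of $S$ a $k$-AP must use, combined with the R\"odl--Ruci\'nski deletion lemma and the Paley--Zygmund inequality to grow a set of ``saturated'' elements. Your fingerprint/greedy construction is instead the Kleitman--Winston-style container argument, which is essentially the mechanism behind the original Balogh--Morris--Samotij proof that the paper is offering an alternative to. So as a strategy it is reasonable, but two of your steps do not go through as written.

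First, the step you flag as ``the main obstacle'' is indeed where the entire content of the theorem lives, and it is not a step one can defer. You need that a greedily chosen $T\subseteq B$ with $|T|\le \tau m$ already has $|L(T)|\ge\delta n$. Nothing in your sketch rules out that the $(k-1)$-APs of $B$ cluster so that their extensions pile up on far fewer than $\delta n$ points for every small $T$; the only leverage you have against this is the hypothesis that $B$ has few $k$-APs, and converting that into a codegree bound strong enough to drive the greedy process is precisely the technical core that the container theorems (or, in the paper, Lemma~\ref{lemma:rodl_rucinski_deletion} plus Paley--Zygmund) supply. Asserting it is a ``direct double-counting estimate'' is not a proof; as written the argument has a hole exactly at the load-bearing wall.

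Second, the cleanup bound is wrong in general. You claim cleanup adds at most $2\gamma n^2(m/n)^k$ elements and that this is $o(m)$ for small $\gamma$. But $n^2(m/n)^k = m\cdot n(m/n)^{k-1}$, and $n(m/n)^{k-1}$ is an increasing, unbounded function of $m$ once $m\gg n^{1-1/(k-1)}$; e.g.\ for $m=\alpha n$ it is $\alpha^{k-1}n$. So $\gamma n^2(m/n)^k$ exceeds $m$ for every fixed $\gamma>0$ as soon as $m$ is slightly above the threshold. Your argument would only close in the regime $m=\Theta(n^{1-1/(k-1)})$, and you give no mechanism (such as random sub-sampling down to the threshold scale, or a robust statement like the paper's Theorem~\ref{thm:main_ind}) to transfer the conclusion to all $m\ge Cn^{1-1/(k-1)}$. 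A smaller point in the same vein: appending $(B\cap L(T))\setminus T$ to $T$ enlarges $L(T)$, so the cleanup would need to be iterated, and you would need to control the total accumulated size across iterations, which the single-pass estimate does not do.
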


It should be noted that the result from \cite{balogh2015independent} provides a bound on the number of subsets without a $k$-term AP, but the same idea gives the counting version if applied with the hypergraph container theorem of Saxton and Thomason \cite{saxton2015hypergraph}.

An attractive corollary of Theorem \ref{thm:main} is the Szemer\'edi's theorem in random subsets of integers.

\begin{theorem} \label{thm:szemeredi_random}
    For every $\alpha > 0$ and integer $k \ge 3$, there exists $C > 0$ such that if $p \ge Cn^{-1/(k-1)}$ then a set $S \subseteq [n]$ formed by taking each number from $[n]$ with probability $p$, independently, with high probability has the property that every $S' \subseteq S$ of size $|S'| \ge \alpha |S|$ contains a $k$-term AP.
\end{theorem}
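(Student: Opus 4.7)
The plan is to deduce Theorem~\ref{thm:szemeredi_random} from Theorem~\ref{thm:main} by a direct first-moment argument, together with Chernoff-style concentration of $|S|$. Observe that the conclusion is equivalent to saying that with high probability there is no AP-free $S' \subseteq S$ with $|S'| \ge \alpha|S|$. Since every subset of an AP-free set is AP-free, it suffices to show that with high probability $S$ contains no AP-free $m$-element subset, for some fixed $m \le \alpha|S|$ to be chosen.

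Given $\alpha$, I would first apply Theorem~\ref{thm:main} with $\beta := \alpha/(4e)$ to obtain constants $\gamma, C_0 > 0$, and then set $C := 4C_0/\alpha$ in the statement to be proved, together with
$$
m := \lceil \alpha p n / 2 \rceil.
$$
The hypothesis $p \ge C n^{-1/(k-1)}$ then forces $m \ge C_0 n^{1 - 1/(k-1)}$, so Theorem~\ref{thm:main} is applicable at this value of $m$. A standard Chernoff bound gives $|S| \ge pn/2$ with probability $1 - o(1)$; on this event, any $S' \subseteq S$ with $|S'| \ge \alpha|S|$ has $|S'| \ge m$, and hence contains an AP-free $m$-subset whenever $S'$ itself is AP-free.

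For the first moment, Theorem~\ref{thm:main} bounds the number of AP-free $m$-subsets of $[n]$ by $\beta^m \binom{n}{m}$, and each such set is contained in $S$ with probability $p^m$, so a union bound yields
$$
\Pr\bigl[\,S \text{ contains an AP-free } m\text{-subset}\,\bigr] \le \beta^m \binom{n}{m} p^m \le \left(\frac{e \beta n p}{m}\right)^m \le \left(\frac{2e\beta}{\alpha}\right)^m = 2^{-m},
$$
which tends to $0$ as $m \to \infty$ (note $m \to \infty$ since $p n \to \infty$). Combining this with the Chernoff event by a final union bound closes the argument.

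There is no real obstacle once Theorem~\ref{thm:main} is in hand; the only thing requiring care is matching the three parameter constraints — the threshold $C_0 n^{1-1/(k-1)}$ demanded by Theorem~\ref{thm:main}, the Chernoff lower bound $pn/2$ on $|S|$, and the density parameter $\alpha$ — which is precisely what the choices of $\beta$, $m$, and $C$ above accomplish.
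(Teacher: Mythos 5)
Your argument is correct, and it is exactly the standard first-moment derivation that the paper has in mind when it says the deduction of Theorem~\ref{thm:szemeredi_random} from Theorem~\ref{thm:main} is left as an exercise. The key reductions are all sound: passing from the family of all large AP-free $S' \subseteq S$ to $m$-element AP-free subsets via downward closure, the Chernoff lower bound $|S| \ge pn/2$ (valid since $pn \to \infty$), the parameter bookkeeping ($\beta = \alpha/(4e)$, $m = \lceil \alpha pn/2\rceil$, $C = 4C_0/\alpha$, which gives $m \ge 2C_0 n^{1-1/(k-1)}$), and the union bound $\beta^m\binom{n}{m}p^m \le (e\beta np/m)^m \le 2^{-m} \to 0$. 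One point worth making explicit, which you use implicitly: an AP-free $m$-set contains $0 < \gamma n^2 (m/n)^k$ arithmetic progressions, so it indeed falls into the exceptional class of Theorem~\ref{thm:main}, and hence the count of AP-free $m$-subsets is at most $\beta^m\binom{n}{m}$. With that observation stated, the proof is complete and matches the intended derivation.
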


We leave easy derivation of Theorem \ref{thm:szemeredi_random} from Theorem \ref{thm:main} as an exercise (alternatively, see \cite[Corollary 1.2]{balogh2015independent}). The case $k=3$ of Theorem \ref{thm:main} was first settled by Kohayakawa, \L uczak, and R\"odl \cite{kohayakawa96} (see \cite{samotij2015independent} for a short proof) and the full statement was proven in breakthroughs by Conlon and Gowers \cite{conlon2016combinatorial} and, independently, by Schacht \cite{schacht2016extremal}. 

The purpose of this paper is to give a short, inductive proof of Theorem \ref{thm:main}.

\section{Proof}

Given sets $S \subseteq D \subseteq [n]$ and integers $0 \le k' \le k$, we denote with $\AP_{k',k}(S, D)$ the number of $k$-term APs in $D$ which contain at least $k'$ elements from $S$. Similarly, given $x \in [n]$, let $\AP_{k',k}(x, S, D)$ denote the number of $k$-term APs in $D$ which contain $x$ and at least $k'$ elements from $S \setminus \{x\}$.

We prove the following strengthening of Theorem \ref{thm:main}.

\begin{theorem} \label{thm:main_ind}
    For every $k, k' \in \mathbb{N}$, where $0 \le k' \le k$ and $k \ge 3$, and every $\beta, \alpha > 0$, there exist $\gamma, \xi, C > 0$ and $n_0$ such that the following holds for $n \ge n_0$ and $m \ge C n^{1 - 1/(k - 1)}$: For any $D \subseteq [n]$ of size $|D| \ge \alpha n$, all but at most $\beta^m \binom{n}{m}$ $m$-element subsets $S \subseteq D$ have the property that $\AP_{k',k}(S \setminus X, D) \ge \gamma n^2 (m/n)^{k'}$ for every for every $X \subseteq S$ of size $|X| \le \xi m$.
\end{theorem}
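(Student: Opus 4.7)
The plan is to proceed by induction on $k'$. The base case $k' = 0$ is immediate: the quantity $\AP_{0,k}(S \setminus X, D)$ counts every $k$-term AP contained in $D$, independent of $S, X$, so the required bound $\gamma n^2$ follows from the Varnavides supersaturated form of Szemer\'edi's theorem applied to $D$, with $\gamma = \gamma(\alpha, k) > 0$, and any $\xi \in (0, 1]$ works.

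For the inductive step, fix $\beta, \alpha > 0$ and apply the inductive hypothesis at level $k'-1$ with $\beta/2$ in place of $\beta$ to obtain constants $\gamma', \xi', C'$. We shall choose $\xi \le \xi'$, $\gamma$ small in terms of $\gamma'$, and $C \ge C'$. Call an $m$-subset $S$ \emph{$(k'-1)$-good} if it satisfies the theorem at level $k'-1$ with parameters $\gamma', \xi'$. The hypothesis bounds the number of $(k'-1)$-bad sets by $(\beta/2)^m\binom{n}{m}$, so it suffices to control the number of $(k'-1)$-good, $k'$-bad sets. For any such $S$, fix a witness $X \subseteq S$ with $|X| \le \xi m$ and $\AP_{k',k}(S \setminus X, D) < \gamma n^2 (m/n)^{k'}$, and put $T = S \setminus X$. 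Applying $(k'-1)$-goodness with $Y = X$ yields $\AP_{k'-1,k}(T, D) \ge \gamma' n^2 (m/n)^{k'-1}$, so the number of $k$-APs in $D$ with \emph{exactly} $k'-1$ elements in $T$ is at least $(\gamma'/2)n^2 (m/n)^{k'-1}$, after handling the trivial range $m = \Omega_\alpha(n)$ separately via Szemer\'edi's theorem on $S \setminus X$.

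The heart of the argument should be a container-style encoding of such $S$: we construct a ``fingerprint'' $T_0 \subseteq T$ of size at most $\tau m$ together with a ``container'' $F(T_0) \subseteq [n]$ of size at most $(1-\delta) n$ that is determined by $T_0$ alone, with $S \setminus T_0 \subseteq F(T_0)$. This yields
\[ \#\{(k'-1)\text{-good, } k'\text{-bad } S\} \;\le\; \binom{n}{\tau m}\binom{(1-\delta)n}{m - \tau m} \;\le\; (\beta/2)^m \binom{n}{m} \]
by a routine binomial estimate once $\tau$ is small and $\delta$ is chosen in terms of $\beta$. The pair $(T_0, F(T_0))$ is produced by the standard peeling procedure: initialise $F = [n]$ and $T_0 = \emptyset$, then at each step move into $T_0$ an element of $T$ whose \emph{completion degree} in the current residual --- the number of $k$-APs through it having $k'-1$ further elements in the already-built part of $T_0$ and all remaining elements inside $F$ --- is maximal, and then delete from $F$ every vertex whose completion degree exceeds a threshold $\mu n (m/n)^{k'-1}$. \emph{The main obstacle} is verifying that the peeling removes $\Omega(n)$ vertices in total in at most $\tau m$ rounds. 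This boils down to a lower bound on the average completion degree of the link hypergraph of a $(k'-1)$-element partial AP, which is precisely where the density assumption $m \ge C n^{1-1/(k-1)}$ enters: it ensures that the supersaturation count $n^2 (m/n)^{k'-1}$ exceeds the trivial per-vertex $k$-AP degree $O(n)$ by enough margin to force heavy vertices to accumulate at every step.
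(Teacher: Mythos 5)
Your approach is genuinely different from the paper's, but the step you yourself flag as ``the main obstacle'' is precisely where the sketch breaks down, and it is not a small detail: it is the entire content of the inductive step. You propose a container-style fingerprint/peeling argument (build $T_0$ greedily, shrink a container $F$), which is in spirit the Balogh--Morris--Samotij / Saxton--Thomason route that this paper explicitly sets out to avoid. Two concrete problems with the sketch as written. First, the peeling never gets started: your ``completion degree'' counts APs through $v$ with $k'-1$ further points in the \emph{already-built} $T_0$, but at the start $T_0=\emptyset$, so for $k'\ge 2$ every completion degree is zero and no vertex is ever heavy. You clearly intend some other link hypergraph, but its precise definition is where all the codegree bookkeeping lives, and none of it is specified. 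Second, and more fundamentally, to conclude that heavy vertices accumulate and $F$ loses $\Omega(n)$ vertices within $\tau m$ rounds you need to control not just the \emph{average} completion degree (a first-moment statement, which is what $(k'-1)$-goodness gives you) but also its \emph{second moment} --- equivalently, the number of overlapping pairs of APs rooted in $T$. Without this, high average degree can be concentrated on a vanishing fraction of vertices and the peeling stalls. The sentence about $m\ge Cn^{1-1/(k-1)}$ ensuring the supersaturation count exceeds the per-vertex degree does not address this.

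The paper resolves exactly this difficulty with a different mechanism. Rather than greedily peeling one container, it partitions $S$ into $2z$ random blocks $S_1,\dots,S_{2z}$ of size $m'=m/(2z)$ and tracks the set $\cT(\hat S)$ of \emph{saturated} vertices $x$ with $\AP_{k'-1,k}(x,\hat S,D)\ge\gamma'\mu_e/2$. The key claim is a win-win: for all but an exponentially small fraction of blocks $S_i$, adding $S_i$ (even after deleting any small subset) either already produces $\ge\gamma\mu$ APs with $\ge k'$ points (property \ref{prop:A}) or increases $|\cT|$ by at least $n/z$ (property \ref{prop:B}). Since $|\cT|\le n$, property \ref{prop:B} cannot recur more than $z$ times, so any ``not-bad'' partition eventually triggers \ref{prop:A}; counting bad sequences then gives the $\beta^m\binom{n}{m}$ bound. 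The jump in $|\cT|$ is forced by the R\"odl--Ruci\'nski deletion lemma (Lemma~\ref{lemma:rodl_rucinski_deletion}, a second-moment statement) together with the Paley--Zygmund inequality --- this is the ingredient your sketch is missing. If you wish to pursue the container route instead, you would need at minimum (i) a precise hypergraph and its codegree hypotheses, and (ii) a deletion/second-moment lemma to make the peeling progress; at that point you would essentially have reproduced the original container proof rather than a new one.
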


Theorem \ref{thm:main_ind} with $k = k'$ implies Theorem \ref{thm:main}. Both the statement of Theorem \ref{thm:main_ind} and the proof are largely based on the ideas of Schacht \cite[Theorem 3.3]{schacht2016extremal}.

The proof of Theorem \ref{thm:main_ind} contains two ingredients. One is the difficult Szemer\'edi's theorem \cite{szemeredi75}, and the other is the following lemma, obtained by a standard application of the R\"odl-Ruci\'nski deletion method \cite[Lemma 4]{rodl1995threshold}. For the sake of completeness, we give a proof in the appendix.

\begin{lemma} \label{lemma:rodl_rucinski_deletion}
Given integers $1 \le k' < k$ and $\beta, \xi > 0$, there exist $C, T > 0$ such that the following holds for $n$ sufficiently large and $m \ge Cn^{1 - 1/(k-1)}$: For all but at most $\beta^m \binom{n}{m}$ $m$-element sets $S \subseteq [n]$, there exists $X \subset S$ of size $|X| \le \xi m$ such that
$$
    \frac{1}{n} \sum_{x \in [n]} \AP_{k',k}(x, S \setminus X, [n])^2 \le T \mu_e^2,
$$
where $\mu_e = n (m/n)^{k'}$.
\end{lemma}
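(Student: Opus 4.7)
The plan is to apply the Rödl--Rucinski deletion method, which proceeds in two stages: a first-moment computation followed by a deletion argument that upgrades the resulting Markov bound into the required exponential failure bound.

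\textbf{First moment.} For a uniformly random $m$-subset $S \subseteq [n]$, I would show
\[
 \E\Bigl[\sum_{x\in[n]} \AP_{k',k}(x,S,[n])^2\Bigr] \le C_0\, n\mu_e^2
\]
for some constant $C_0 = C_0(k)$. Expanding the square, this counts triples $(x, P_1, P_2)$ with $x \in P_1 \cap P_2$ and $|P_i \cap (S\setminus\{x\})| \ge k'$ for both $i$; split by $j := |P_1 \cap P_2|$. The dominant case $j = 1$ has $\Theta(n^3)$ ordered pairs of $k$-APs sharing one vertex $x$, and the two ``good'' events on the disjoint sets $P_i\setminus\{x\}$ (of size $k-1$) are independent with joint probability $\Theta((m/n)^{2k'})$, totalling $\Theta(n\mu_e^2)$. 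For $j \ge 2$ the pair count drops to $O(n^2)$ and the joint probability becomes $O((m/n)^{\max(2k'-j,\,k')})$; a routine check using $m \ge C n^{1-1/(k-1)}$ for $C$ sufficiently large shows each such term is also $O(n\mu_e^2)$.

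\textbf{Deletion.} Markov on the above only yields $O(1/T)$ failure probability, whereas the lemma demands $\beta^m$. To bridge this, view each summand above as a \emph{witness tuple} $(x, P_1, P_2, A, B)$ with $A \subseteq P_1\setminus\{x\}$, $B \subseteq P_2\setminus\{x\}$ of size $k'$, so that $\sum_x f(x, S\setminus X)^2$ is bounded above by the number of witness tuples whose support $A \cup B$ lies in $S\setminus X$. Call $S$ bad if no $X \subseteq S$ with $|X| \le \xi m$ reduces this count below $T n\mu_e^2$; a standard greedy/matching argument shows every bad $S$ contains a ``spread'' family of witness tuples---roughly $\Omega(\xi m)$ tuples with essentially pairwise disjoint supports. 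A union bound over such families, using the total $O(n^3)$ witness tuples (each with support of size $2k'-1$ or $2k'$) and the probability factor $(m/n)^{|A\cup B|}$, is absorbed into $\beta^m\binom{n}{m}$ precisely because of the hypothesis $m \ge C n^{1-1/(k-1)}$ with $C$ large.

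\textbf{Main obstacle.} The crux is the structural claim in the second step: showing that failure of $\xi m$-deletion implies the existence of $\Omega(\xi m)$ witness tuples in $S$ with essentially disjoint supports. This is the classical Rödl--Rucinski step, and the exponent $1 - 1/(k-1)$ on $m$ arises exactly as the balance point between the $O(n^3)$ count of witness tuples and the probability factor $(m/n)^{2k'-1}$ driving the union bound; the first-moment estimate itself is routine by comparison.
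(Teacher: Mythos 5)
Your proposal takes essentially the same route as the paper: both reduce $\sum_x \AP_{k',k}(x,S,[n])^2$ to counting ``paired'' witness sets $A\cup B$ landing inside $S$, estimate the relevant family sizes against $n\mu_e^2$ using $m \ge Cn^{1-1/(k-1)}$, and then invoke the Rödl--Ruciński deletion argument (a greedy matching of disjoint supports plus a union bound) to upgrade Markov to an exponential failure bound; the paper simply packages that last step as a general deletion lemma (Lemma A.1) applied separately to each size class $\cB'_s$. One small imprecision to fix: the supports $A\cup B$ range over sizes $k'$ through $2k'$, not just $2k'-1$ or $2k'$, so the deletion step must be carried out per size class as in the paper (the smaller classes are dominated because they live inside bounded-length APs and hence have only $O(n^2)$ members).
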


The need for removing a set $X$ in order to get a bound on the sum in the previous lemma hints why we need a stronger, `robust' version of Theorem \ref{thm:main}.

\begin{proof}[Proof of Theorem \ref{thm:main_ind}]
We prove the theorem by double induction on $k$ and $k'$. For $k' = 0$, we just need that $D$ contains $\gamma n^2$ $k$-term arithmetic progressions, which follows from Szemer\'edi's theorem and a simple averaging argument by Varnavides \cite{varnavides59}. Suppose that it holds for $k' - 1$ (as $k'$) for some $k' > 0$. We show that then it also holds for $k'$. Without loss of generality, we can assume $m < n/2$.

Let $\lambda = \beta^6 / (100e)^3$. We shall invoke the induction hypothesis with $k' - 1$ (as $k'$) and $\lambda$ (as both $\beta$ and $\alpha$). Let $\gamma', \xi', C$ be constants corresponding to these parameters. Furthermore, we invoke Lemma \ref{lemma:rodl_rucinski_deletion} with $\lambda$ (as $\beta$) and $\xi'/2$ (as $\xi$), and let $T > 0$ be the corresponding constant. Set
$$
    z = \lceil \gamma' / (4T) \rceil, \quad \gamma = \gamma' k / (2 \cdot (6z)^{k'}), \quad \xi = \xi'/(12z).
$$
We prove the theorem for $m$ divisible by $2z$, which is easily seen to imply the statement for every other $m$. The following values will be of interest:
$$
    \mu = n^2 (m/n)^{k'}, \quad m' = m/(2z), \quad \mu_e = k n (m'/3n)^{k'-1}.
$$

Given a (small) subset $S \subseteq D$, let us denote with $\cT(S)$ the set of `saturated' elements as follows:
$$
    \cT(S) = \left\{ x \in D \colon \AP_{k'-1,k}(x, S, D) \ge \gamma' \mu_e / 2 \right\}.
$$
The following definition encompasses the `advancing' argument used in the proof.

\textbf{Advancing property $\cA(\hat S)$.} Given $\hat S \subseteq D$, we say that $S \subseteq D$ is \emph{advancing} with respect to $\hat S$ if, for every $X \subseteq S$ of size $|X| \le \xi' m' / 6$, at least one of the following holds:
\begin{enumerate}[(A)]
    \item \label{prop:A} $\AP_{k',k}(\hat S \cup (S \setminus X), D) \ge \gamma \mu$,
    \item \label{prop:B} $|\cT(\hat S \cup (S \setminus X ))| > |\cT(\hat S)| + n/z$.
\end{enumerate}

\begin{claim} \label{claim:main_counting_claim}
Given $\hat S \subseteq D$, all but at most $(\beta/4)^{2m'} \binom{n}{m'}$ $m'$-element subsets $S \subset D$ satisfy $\cA(\hat S)$.
\end{claim}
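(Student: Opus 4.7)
The plan is to bound the number of $m'$-subsets $S \subseteq D$ that fail $\cA(\hat S)$ by reducing to a count on $\tilde S := S \setminus X$, where $X$ is a witness of size $|X| \le \xi' m'/6$. If $m'' := |\tilde S| \in [m'(1-\xi'/6), m']$, each bad $\tilde S$ extends to at most $\binom{n}{m' - m''} \le (6e/\xi')^{\xi' m'/6}$ supersets, a combinatorial factor engineered to be absorbed by the tiny choice $\lambda = \beta^6/(100e)^3$ in the final bound $(\beta/4)^{2m'} \binom{n}{m'}$. It therefore suffices to bound the number of $m''$-subsets $\tilde S \subseteq D$ making both (A) and (B) fail.

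I invoke two ingredients at size $m''$: the induction hypothesis (with $k'-1$ in place of $k'$ and $\lambda$ in place of both $\beta, \alpha$) and Lemma~\ref{lemma:rodl_rucinski_deletion} (with $\lambda$ in place of $\beta$ and $\xi'/2$ in place of $\xi$). Together these rule out all but $2\lambda^{m''}\binom{n}{m''}$ subsets; the \emph{good} remainder satisfies the robust bound $\AP_{k'-1,k}(\tilde S \setminus Y, D) \ge \gamma' n^2 (m''/n)^{k'-1}$ for every $|Y| \le \xi' m''$, and admits a deletion set $X' \subseteq \tilde S$, $|X'| \le \xi' m''/2$, with $\sum_x \AP_{k'-1,k}(x, \tilde S \setminus X', [n])^2 = O(n\mu_e^2)$. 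Setting $\tilde S' := \tilde S \setminus X'$, the pointwise identity
\[
\sum_{x \in [n]} \AP_{k'-1,k}(x, \tilde S', D) = (k'-1)\AP_{k',k}(\tilde S', D) + (k-k'+1)\AP_{k'-1,k}(\tilde S', D)
\]
combined with the induction produces a lower bound of order $\gamma' n \mu_e$. Assuming (A) and (B) both fail, split this sum by membership in $\cT(\hat S \cup \tilde S')$: the complement contributes $< n\gamma'\mu_e/2$ by definition of $\cT$; Cauchy--Schwarz against the R--R bound controls the $\cT$-part by $\mu_e \sqrt{Tn(|\cT(\hat S)| + n/z)}$, using $|\cT(\hat S \cup \tilde S')| \le |\cT(\hat S)| + n/z$ from (B) failing. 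The choice $z = \lceil \gamma'/(4T)\rceil$ is tuned exactly so that these bounds are inconsistent when $|\cT(\hat S)|$ is small.

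\textbf{Main obstacle.} Since $|\cT(\hat S)|$ is not bounded a priori, the Cauchy--Schwarz step alone does not close when $|\cT(\hat S)|$ is a positive fraction of $n$. In that regime I would exploit a complementary bound: each $x \in (\tilde S \cap \cT(\hat S)) \setminus \hat S$ lies on $\ge \gamma' \mu_e /2$ $(k'-1)$-rich APs in $\hat S$, each of which becomes a $k'$-rich AP in $\hat S \cup \tilde S$, giving $\AP_{k',k}(\hat S \cup \tilde S, D) \ge |\tilde S \cap \cT(\hat S) \setminus \hat S| \cdot \gamma'\mu_e/(2k)$. By hypergeometric concentration this intersection is near $|\cT(\hat S)| m''/|D|$ for typical $\tilde S$, forcing (A) to hold whenever $|\cT(\hat S)|$ is suitably large. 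Calibrating the constants $z = \lceil \gamma'/(4T) \rceil$, $\gamma = \gamma' k/(2(6z)^{k'})$, and $\xi = \xi'/(12z)$ so that both regimes are closed by a single unified inequality, and cleanly handling the combinatorial accounting that translates the bound $2\lambda^{m''}\binom{n}{m''}$ on $\tilde S$ into the required $(\beta/4)^{2m'}\binom{n}{m'}$ bound on $S$, is the most delicate technical task.
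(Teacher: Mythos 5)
Your proposal identifies the right ingredients (induction on $k'$, the deletion lemma, a second-moment argument), but it diverges from the paper's route in a way that leaves a genuine gap, and you correctly sense the trouble yourself in the "main obstacle" paragraph.

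The central issue is that you apply the induction hypothesis and Lemma~\ref{lemma:rodl_rucinski_deletion} to (subsets of) $D$, and then have to argue separately that the heavy elements you find are \emph{new}, i.e.\ not already in $\cT(\hat S)$. The paper avoids this entirely by a two-case split on $|D\setminus\cT(\hat S)|$. If $|D\setminus\cT(\hat S)|<\lambda n$, then all but $(\beta/4)^{2m'}\binom{n}{m'}$ of the $m'$-subsets $S\subseteq D$ have at least $2m'/3$ elements in $\cT(\hat S)$, and then property~\ref{prop:A} follows immediately by summing the saturation bound over those elements (this is your "hypergeometric concentration" case, but it is a one-line union bound, no concentration needed). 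If instead $|D\setminus\cT(\hat S)|\ge\lambda n$, the induction hypothesis and the deletion lemma are invoked \emph{on $D'=D\setminus\cT(\hat S)$} for $m'/3$-element subsets $S'\subseteq S\cap D'$. That restriction is the crucial simplification: a Paley--Zygmund application to $Y=\AP_{k'-1,k}(x,S'_X,D')$ on a uniform $x$ gives $\Pr(Y\ge\tfrac12\E Y)\ge \E[Y]^2/(4\E[Y^2])>1/z$, hence $\ge n/z$ elements $x\in D'$ with $\AP_{k'-1,k}(x,S'_X,D')\ge\gamma'\mu_e/2$, and these are \emph{automatically} outside $\cT(\hat S)$ because $D'\cap\cT(\hat S)=\emptyset$. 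Property~\ref{prop:B} then holds with no further work.

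Your Cauchy--Schwarz alternative does not close quantitatively. After the split by membership in $\cT(\hat S\cup\tilde S')$ you obtain
$\gamma' n\mu_e\lesssim n\gamma'\mu_e/2+\mu_e\sqrt{Tn\bigl(|\cT(\hat S)|+n/z\bigr)}$,
which requires $|\cT(\hat S)|+n/z\lesssim \gamma'^2 n/T$. With $z$ of the scale the paper uses (roughly $T/\gamma'^2$, forcing the Paley--Zygmund probability to exceed $1/z$), $n/z$ alone is already of order $\gamma'^2 n/T$, so there is no room for $|\cT(\hat S)|$, which can be a positive fraction of $n$. Thus the $|\cT(\hat S)|$ large case is a real obstruction to this route, not merely a technicality, and the fix you sketch is exactly the paper's Case~1 but framed more heavily than needed. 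The preliminary reduction from $S$ to $\tilde S=S\setminus X$ via a superset union bound is also superfluous; the paper bounds the bad $S$ directly by first disposing of those $S$ with fewer than $m'/3$ elements in $D'$, and then union-bounding over bad $m'/3$-subsets $S'\subseteq D'$, times $\binom{n}{2m'/3}$ for the complement.

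In short: swap Cauchy--Schwarz for Paley--Zygmund, apply the induction hypothesis and deletion lemma to $D'=D\setminus\cT(\hat S)$ rather than to $D$, and handle the $|D\setminus\cT(\hat S)|<\lambda n$ case by a direct counting bound. With those changes your sketch collapses to the paper's proof.
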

We postpone the proof of the claim until the end.

The following concept is the heart of our proof: Given a subset $Z \subseteq \{1, \ldots, 2z\}$, a sequence of $m'$-element subsets $\mathbf{S} = (S_1, \ldots, S_{2z})$ and a sequence of subsets $\mathbf{X} = (X_i)_{i \in Z}$, where for each $i \in Z$ we have $X_i \subseteq S_i \subseteq D$ and $|X_i| \le \xi' m' / 6$, we say that $\mathbf{S}$ is \emph{$(Z, \mathbf{X})$-bad} if, for each $i \in [2z] \setminus Z$, $S_i \notin \cA(\hat S_{i-1})$ where
\begin{equation} \label{eq:hat_S}
    \hat S_{i} = \bigcup_{\substack{j \in Z\\j \le i}} S_j \setminus X_j.
\end{equation}
We say that $\mathbf{S}$ is \emph{bad} if it is $(Z, \mathbf{X})$-bad for some choice of $Z$ and $\mathbf{X}$ with $|Z| \le z$.

Next, we show that if a set $S \subseteq [n]$ of size $|S| = m = 2zm'$ can be partitioned into $\mathbf{S} = (S_1, \ldots, S_{2z})$ which is not bad, then $S$ satisfies the property of Theorem \ref{thm:main_ind}. To this end, suppose that $\mathbf{S}$ is such a partition. Consider some $X \subseteq S$ of size $|X| \le \xi m = \xi' m' / 6$. Set $Z = \emptyset$, and for each $i \in [2z]$, sequentially, do the following: if $S_i \in \cA(\hat S_{i-1})$ then add $i$ to $Z$ and set $X_i = X \cap S_i$, where $\hat S_{i-1}$ is as defined in \eqref{eq:hat_S} with respect to the current set $Z$. As $\mathbf{S}$ is not bad, we have $|Z| > z$. Without loss of generality, suppose $\{1, \ldots, z+1\} \subseteq Z$. If $\AP_{k'}(\hat S_{z+1}) < \gamma \mu$, then by the property \ref{prop:B} we have
$$
    |\cT(\hat S_{i+1})| > |\cT(\hat S_i)| + n/z
$$
for every $i \in \{1, \ldots, z\}$. However, then $|\cT(\hat S_{z+1})| > n \ge |D|$, which cannot be.

Note that each subset $S$ of size $|S| = 2zm'$, which does not satisfy the property of the theorem, gives exactly $m! / (m'!)^{2z}$ unique bad sequences. Our aim is to show that there are at most $(\beta/2)^m \binom{n}{m'}^{2z}$ bad sequences $\mathbf{S}$, which implies there are at most
$$
    (\beta/2)^m \binom{n}{m'}^{2z} \cdot \frac{(m'!)^{2z}}{m!} < (\beta/2)^m \frac{n^{m}}{m!} \stackrel{(m<n/2)}{<} \beta^m \binom{n}{m}
$$
`bad' subsets $S$. The number of bad sequences can easily be counted using Claim \ref{claim:main_counting_claim}. First, we can choose the witnessing $Z$ of size $|Z| = z' \le z$ in at most $2^{2z}$ ways. For a fixed choice of $Z$, we choose sets $S_i$ and a witnessing $X_i$, for $i \in Z$, in at most $\binom{n}{m'}^{z'} 2^{m' z'}$ ways. For each other $i \in [2z] \setminus Z$, there are at most $(\beta/4)^{2m'} \binom{n}{m'}$ ways to choose $S_i \notin \cA(\hat S_{i-1})$. As $|[2z] \setminus Z| \ge z$ we have $m' (2z - z') \ge m/2$, thus putting all the factors together gives at most $(\beta/2)^m \binom{n}{m'}^{2z}$ bad sequence, as required.
\end{proof}

It remains to prove Claim \ref{claim:main_counting_claim}.

\begin{proof}[Proof of Claim \ref{claim:main_counting_claim}]
For any $m'$-element set $S \subseteq D$ which contains at least $2m'/3$ elements from $\cT(\hat S)$, we have
\begin{equation} \label{eq:prop_a_count}
    \AP_{k',k}(\hat S \cup (S \setminus X), D) \ge (m'/3) \cdot \gamma' \mu_e / 2 \ge \gamma \mu
\end{equation}
for any $X \subset S$ of size $|X| \le m'/3$. If $|D \setminus \cT(\hat S)| < \lambda n$, then the number of $m'$-element subsets $S \subseteq D$ which do not have this property, that is which contain at least $t \ge m'/3$ elements from $D \setminus \cT(\hat S)$, is at most
$$
    \sum_{t \ge m'/3} \binom{\lambda n}{t} \binom{n}{m' - t} < m \binom{\lambda n}{m'/3}\binom{n}{2m'/3} < (3e \lambda^{1/3})^{m'} \binom{n}{m'} < (\beta/4)^{2m'} \binom{n}{m'},
$$
and the part \ref{prop:A} holds for all other sets.

Suppose now that $|D \setminus \cT(\hat S)| \ge \lambda n$. By the observation leading to \eqref{eq:prop_a_count}, it suffices to consider those $S$ which contain at least $m'/3$ elements from $D' = D \setminus \cT(\hat S)$. By the induction hypothesis applied with $k' - 1$ (as $k'$), $\lambda$ (as both $\alpha$ and $\beta$) and $D'$ (as $D$) --- giving $\gamma', \xi'$ --- and by Lemma \ref{lemma:rodl_rucinski_deletion} applied with $\xi'/2$ (as $\xi$) and $\lambda$ (as $\beta$), for all but at most $2 \lambda^{m'/3} \binom{n}{m'/3}$ sets $S' \subseteq D'$ of size $m'/3$ the following holds: There exists a set $X' \subseteq S'$ of size $|X'| \le \xi' m' / 6$ such that, for any $X \subseteq S'$ of size $|X| \le \xi' m' / 6$, we have
\begin{equation} \label{eq:first_moment_sum}
    \AP_{k'-1,k}(S'_X, D') \ge \frac{\gamma'}{k} n \mu_e
\end{equation}
and
\begin{equation} \label{eq:second_moment}
    \frac{1}{n} \sum_{x \in [n]} \AP_{k'-1,k}(x, S'_X, D')^2 \le T \mu_e^2,
\end{equation}
where $S'_X = S' \setminus (X \cup X')$. The constants were chosen such that $|X \cup X'| \le \xi' |S'|$. Therefore, at most
$$
    2 \lambda^{m'/3} \binom{n}{m'/3} \binom{n}{2m'/3} < (\beta/4)^{2m'} \binom{n}{m'}
$$
$m'$-element subsets $S \subseteq D$ with $|S \cap D'| \ge m'/3$ do not contain such a subset $S'$. We show that all other sets $S$ satisfy the part \ref{prop:B}.

From \eqref{eq:first_moment_sum} we get
\begin{equation} \label{eq:first_moment}
    \frac{1}{n} \sum_{x \in [n]} \AP_{k'-1}(x, S'_X, D') = \AP_{k'-1}(S'_X, D') k / n \ge \gamma' \mu_e.
\end{equation}
Let $Y = \AP_{k',k}(x, S_x', D')$ for $x \in [n]$ chosen uniformly at random. From \eqref{eq:first_moment} and \eqref{eq:second_moment} we conclude
$$
\mathbb{E}[Y] \ge  \gamma' \mu_e \quad \text{and} \quad \mathbb{E}[Y^2] \le T \mu_e^2,
$$
thus by the Paley-Zygmund inequality, we have 
$$
    \Pr\left(Y \ge \frac{1}{2}\mathbb{E}[Y] \right) \ge \frac{\mathbb{E}[Y]^2}{4 \mathbb{E}[Y^2]} \ge \gamma' / (4T) > 1 / z.
$$
In particular, there are at least $n/z$ elements $x \in D'$ such that $\AP_{k'-1,k}(x, S'_X, D') \ge \gamma' \mu_e / 2$. As $D' \cap \cT(\hat S) = \emptyset$, each such element belongs to $\cT(\hat S \cup (S \setminus X))$ and, by the definition of $D'$, not to $\cT(\hat S)$, thus \ref{prop:B} holds.
\end{proof}

{\small \bibliographystyle{abbrv} \bibliography{main}}

\begin{thebibliography}{10}

\bibitem{balogh17}
J.~{Balogh}, H.~{Liu}, and M.~{Sharifzadeh}.
\newblock {The number of subsets of integers with no \(k\)-term arithmetic
  progression}.
\newblock {\em {Int. Math. Res. Not.}}, 2017(20):6168--6186, 2017.

\bibitem{balogh2015independent}
J.~Balogh, R.~Morris, and W.~Samotij.
\newblock Independent sets in hypergraphs.
\newblock {\em Journal of the American Mathematical Society}, 28(3):669--709,
  2015.

\bibitem{conlon2016combinatorial}
D.~Conlon and W.~T. Gowers.
\newblock Combinatorial theorems in sparse random sets.
\newblock {\em Annals of Mathematics}, pages 367--454, 2016.

\bibitem{kohayakawa96}
Y.~{Kohayakawa}, T.~{{\L}uczak}, and V.~{R\"odl}.
\newblock {Arithmetic progressions of length three in subsets of a random set}.
\newblock {\em {Acta Arith.}}, 75(2):133--163, 1996.

\bibitem{rodl1995threshold}
V.~R{\"o}dl and A.~Ruci{\'n}ski.
\newblock Threshold functions for {R}amsey properties.
\newblock {\em Journal of the American Mathematical Society}, 8(4):917--942,
  1995.

\bibitem{samotij2015independent}
W.~{Samotij}.
\newblock {Counting independent sets in graphs}.
\newblock {\em {Eur. J. Comb.}}, 48:5--18, 2015.

\bibitem{saxton2015hypergraph}
D.~Saxton and A.~Thomason.
\newblock Hypergraph containers.
\newblock {\em Inventiones mathematicae}, 201(3):925--992, 2015.

\bibitem{schacht2016extremal}
M.~Schacht.
\newblock Extremal results for random discrete structures.
\newblock {\em Annals of Mathematics}, pages 333--365, 2016.

\bibitem{szemeredi75}
E.~{Szemer\'edi}.
\newblock {On sets of integers containing no \(k\) elements in arithmetic
  progression}.
\newblock {\em {Acta Arith.}}, 27:199--245, 1975.

\bibitem{varnavides59}
P.~{Varnavides}.
\newblock {On certain sets of positive density}.
\newblock {\em {J. Lond. Math. Soc.}}, 34:358--360, 1959.

\end{thebibliography}

\appendix
\section{Proof of Lemma \ref{lemma:rodl_rucinski_deletion}}

The following result is an analogue of \cite[Lemma 4]{rodl1995threshold}.

\begin{lemma} \label{lemma:deletion_general}
    Let $Y$ be a set with $N$ elements, and $\cS$ be a family of $s$-element subsets of $Y$, for some integers $s$ and $N$. Let $k < N/4$ be an integer and $T > 1$. Then, for all but at most $$
        T^{-q/s} \binom{N}{m}
    $$
    subsets $Y_m \subseteq Y$ of size $m$, there exists a subset $X \subseteq Y_m$ of size $|X| \le q$ such that $Y_m \setminus X$ contains at most 
    $$
        2^s T |\cS|(m/N)^{s}
    $$
    sets from $\cS$.
\end{lemma}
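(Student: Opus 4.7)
The plan is a standard greedy-plus-double-counting argument in the spirit of R\"odl--Ruci\'nski. Put $L := 2^s T |\cS|(m/N)^s$ and call an $m$-subset $Y_m \subseteq Y$ \emph{bad} if for every $X \subseteq Y_m$ with $|X| \le q$ the set $Y_m \setminus X$ still contains more than $L$ members of $\cS$. The goal is to bound the number of bad $Y_m$ by $T^{-q/s}\binom{N}{m}$.

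First I would observe that every bad $Y_m$ contains many pairwise disjoint copies from $\cS$. For simplicity assume $s \mid q$ and write $t := q/s$; otherwise one runs the same argument with $t := \lfloor q/s \rfloor$, losing only constant factors absorbable into $T$. I build disjoint $S_1, \ldots, S_t \in \cS$ inside $Y_m$ greedily: having chosen $S_1, \ldots, S_i$ with $i < t$, take $X := S_1 \cup \cdots \cup S_i$, which has size $is \le q$; by badness the set $Y_m \setminus X$ still contains more than $L$ members of $\cS$, any of which can serve as $S_{i+1}$. Hence at least $L^t$ ordered sequences $(S_1, \ldots, S_t)$ of pairwise disjoint $\cS$-sets lie inside every bad $Y_m$.

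Next I would double count pairs $(Y_m, (S_1, \ldots, S_t))$ where $Y_m$ is bad and $S_1, \ldots, S_t$ are pairwise disjoint members of $\cS$ contained in $Y_m$. The greedy step lower-bounds the count by $L^t$ times the number of bad $Y_m$. On the other side, there are at most $|\cS|^t$ ordered tuples of pairwise disjoint sets from $\cS$, and each such tuple is contained in exactly $\binom{N-ts}{m-ts}$ $m$-subsets, yielding the upper bound $|\cS|^t \binom{N-ts}{m-ts}$. Combining the two estimates, the number of bad $Y_m$ is at most $(|\cS|/L)^t \binom{N-ts}{m-ts}$.

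It remains to simplify. Substituting the definition of $L$ cancels $|\cS|^t$ and produces the factor $(N/m)^{ts} 2^{-st} T^{-t}$; then the elementary identity $\binom{N-ts}{m-ts}/\binom{N}{m} = \binom{m}{ts}/\binom{N}{ts} \le (m/N)^{ts}$ (valid since $m \le N$) cancels the $(N/m)^{ts}$ factor and yields the bound $2^{-st}T^{-t}\binom{N}{m} \le T^{-q/s}\binom{N}{m}$, as required. I do not foresee any genuine obstacle: the argument is entirely elementary. The only minor bookkeeping nuisance is the assumption $s \mid q$; handling a non-integer $q/s$ is a routine adjustment that weakens the exponent only by rounding.
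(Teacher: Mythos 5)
Your argument is the standard R\"odl--Ruci\'nski greedy--plus--double-counting proof, which is precisely what the paper intends: the appendix states Lemma~\ref{lemma:deletion_general} without proof, describing it as ``an analogue of \cite[Lemma 4]{rodl1995threshold}''. The greedy construction of pairwise disjoint $\cS$-sets inside a bad $Y_m$, the double count of pairs $(Y_m, (S_1,\dots,S_t))$, and the simplification via $\binom{N-ts}{m-ts}/\binom{N}{m}=\binom{m}{ts}/\binom{N}{ts}\le (m/N)^{ts}$ are all correct and are the right steps.

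The one flaw is the treatment of non-integral $q/s$. You set $t=\lfloor q/s\rfloor$ and claim the loss is ``absorbable into $T$'', but $T$ is a \emph{given} parameter of the lemma --- you are not free to enlarge it. With $t=\lfloor q/s\rfloor$ the bound you obtain is $2^{-st}T^{-t}\binom{N}{m}$, and $2^{-st}T^{-t}\le T^{-q/s}$ requires $2^{st}\ge T^{\,q/s - t}$, which fails when $T>2^{st}$ (and is vacuous when $q<s$, where $t=0$). The clean fix is to round the other way: take $t=\lceil q/s\rceil$. The greedy still runs, since at every step $i\le t$ the deleted set $X=S_1\cup\dots\cup S_{i-1}$ has size $(i-1)s\le (t-1)s<q$; and now $t\ge q/s$ gives $2^{-st}T^{-t}\le T^{-t}\le T^{-q/s}$ immediately, so the $2^s$ slack in the definition of $L$ is not even needed for the rounding. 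With this one-character change your proof is complete.
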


\begin{proof}[Proof of Lemma \ref{lemma:rodl_rucinski_deletion}]
Let $\cB$ be the family of $k'$-element subsets $B \subseteq [n]$ which are contained in a $k$-term arithmetic progression. Note that for each $B$ there are at most $K$ such APs, for some constant $K$ depending only on $k$. Furthermore, we write $B \sim B'$ for $B, B' \in \cB$ if there exist $k$-APs $B \subseteq A$ and $B' \subseteq A'$ such that $A \cap A'$ contains an element which does not belong to $B \cup B'$. With this notation at hand, for a given set $S \subseteq [n]$ we have
\begin{equation} \label{eq:sum_AP}
    \sum_{x \in [n]} \AP_{k',k}(x, S, [n])^2 \le K^2 \sum_{\substack{B \sim B'\\B, B' \in \cB}} \mathbf{1}(B \subseteq S) \cdot \mathbf{1}(B' \subseteq S) = K^2 \sum_{\substack{B \sim B'\\B, B' \in \cB}} \mathbf{1}(B \cup B' \subseteq S),
\end{equation}
where $\mathbf{1}(\cdot)$ is an indicator function. Let
$$
    \cB' = \{ B \cup B' \colon B, B' \in \cB \text{ and } B \sim B'\}, 
$$
and note that we can further upper bound \eqref{eq:sum_AP} as
$$
   2 \cdot 3^{2k'} \cdot K^2 \sum_{\hat B \in \cB'} \mathbf{1}(\hat B \subseteq S).
$$
The first factor of $2$ takes into account that we count $B \sim B'$ and $B' \sim B$ separately, and $3^{2k'}$ is an upper bound on the number of ways we can represent $\hat B$ as $B \cup B'$. Let $\cB'_s \subseteq \cB'$ denote sets which are of size exactly $s$, for $s \in \{k', \ldots, 2k'\}$. By Lemma \ref{lemma:deletion_general}, for all but at most
\begin{equation} \label{eq:beta_final}
    \sum_{s=k'}^{2k'} T^{-q/s} \binom{n}{m} < 2k' T^{-q/(2k')} \binom{n}{m}
\end{equation}
$m$-element subsets $S$, for each $s$ there exists $X_s \subseteq S$ of size $|X_s| < q = \xi' m$ such that $S \setminus X_s$ contains at most
\begin{equation} \label{eq:TBs}
    2^s T |\cB_s'| (m/n)^s 
\end{equation}
sets from $\cB_s'$. 

If $s < 2k'$, then by retracing the definition of $B \sim B'$ we see that $B \cup B'$ belongs to an arithmetic progression, thus $|\cB'_s| = O(n^2)$. As $n^2 (m/n)^{k'} = o(n^3 (m/n)^{2k'})$, the contribution coming from $\cB_{s}'$ for $s < 2k'$ is negligible. For $s = 2k'$ we have $|\cB_{2k'}'| = O(n^3)$, thus \eqref{eq:TBs} is of order $O(n^3 (m/n)^{2k'})$, which is exactly the bound we are aiming for. By choosing $\xi'$ to be sufficiently small for a given $\xi$, and correspondingly increasing $T$, the lemma follows.
\end{proof}

\end{document}